\documentclass{article}

\usepackage{arxiv}

\usepackage[utf8]{inputenc} 
\usepackage[T1]{fontenc}    
\usepackage{hyperref}       
\usepackage{url}            
\usepackage{booktabs}       
\usepackage{amsfonts}       
\usepackage{amssymb}
\usepackage{amsmath}
\usepackage{xcolor}
\usepackage{amsthm}
\usepackage{nicefrac}       
\usepackage{microtype}      
\usepackage{lipsum}		
\usepackage{graphicx}
\newtheorem{thm}{Theorem}

\newcommand{\thmref}[1]{Theorem~\ref{#1}}

\title{Construction of a family of $C^1$ convex integro cubic splines}


\author{ \href{https://orcid.org/0000-0000-0000-0000}{\includegraphics[scale=0.06]{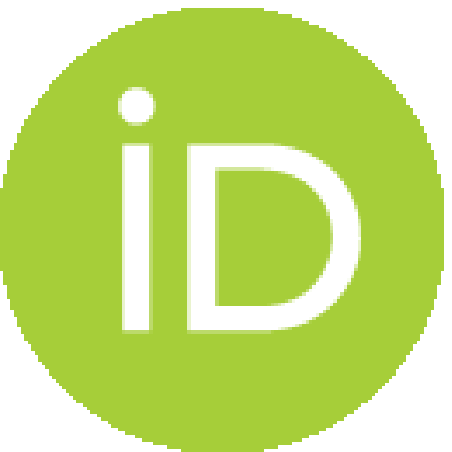}\hspace{1mm}Zhanlav,~T.}\\
	Institute of Mathematics and Digital Technologies\\
	Mongolian Academy of Sciences\\
	Ulaanbaatar, 13330 \\
	\texttt{tzhanlav@yahoo.com} \
	\And
	\href{https://orcid.org/0000-0002-4845-9019}{\includegraphics[scale=0.06]{eps/orcid}\hspace{1mm}Mijiddorj, R.} \\
	Department of Informatics\\
	Mongolian National University of Education\\
	Ulaanbaatar, 13330 \\
	\texttt{mijiddorj@msue.edu.mn} \\
}



\begin{document}
\maketitle

\begin{abstract}
We construct a family of monotone and convex $C^1$ integro cubic splines under a strictly convex position of the dataset. Then, we find an optimal spline by considering its approximation properties. Finally, we give some examples to illustrate the convex-preserving properties of these splines.
\end{abstract}

\keywords{Shape-preserving \and Approximation \and Integro spline}

\section{Introduction}
Let $\Delta_k$ be the non-uniform partition on $[a,b]$, $a=x_0<x_1<\cdots<x_k=b$, and $h_{i+1}=x_{i+1}-x_i$, $i=0,\cdots,k-1$, are step sizes. Let $S(x)$ be a cubic spline that approximates a function $u(x)$. We assume that the function values $u_i=u(x_i)$ are not given, but the integral values $h_{i+1}I_{i+1}$ on the subintervals $[x_i,x_{i+1}]$ of $u(x)$ are known. The problem of the construction of an integro cubic spline (see \cite{Beh06}) is to find $S(x)$ such that
\begin{equation}\label{zmcov1}
    \int\limits_{x_{i}}^{x_{i+1}}S(x)dx=\int\limits_{x_{i}}^{x_{i+1}}u(x)dx=h_{i+1}I_{i+1},~~~i=0,\cdots,k-1.
\end{equation}
We use a notation $m_i=S'(x_i)$ and piecewise polynomial representation
\begin{equation}\label{zmcov2}
\begin{split}
    S(x)&=(1-t)^2(1+2t)S(x_i)+t^2(3-2t)S(x_{i+1})+h_{i+1}t(1-t)\{(1-t)m_i-tm_{i+1}\},\\
    &x\in[x_i,x_{i+1}],~~t=\frac{x-x_i}{h_{i+1}},~~i=0,\cdots,k-1,~~t\in[0,1].
\end{split}
\end{equation}
The sufficient conditions for convexity of cubic histosplines derived in \cite{Neu78,Schm94} can be written in the form:
\begin{equation}\label{zmcov3}
 \begin{split}
   &2m_{i-1}+m_i\leq \frac3{h_i}(S(x_i)-S(x_{i-1}))\leq m_{i-1}+2m_i, ~~~i=1,\cdots,k,\\
   &\frac12(S(x_{i-1})+S(x_{i}))+\frac{h_i}{12}(m_{i-1}-m_i)=I_i, ~~~i=1,\cdots,k.
 \end{split}
\end{equation}
 Obviously, the system \eqref{zmcov3} has an infinite number of solutions. In \cite{Schm94,Schm3,Schm4}, the staircase algorithm with three terms of recurrence relations was used to find the solutions of \eqref{zmcov3}.
Moreover, shape-preserving approximations of histosplines have been studied in \cite{Fish2005,Fish2007,Zencak2002} and references therein. There are two traditional approaches to constructing shape-preserving histosplines: additional knots of spline and splines of a higher order with less smoothness \cite{Fish2007}. It is well known that the interpolating cubic spline of the class $C^2$ does not preserve the monotonicity and convexity of the input data. Recently, the shape-preserving properties of the $C^2$ local integro cubic spline have been investigated only on a uniform partition in \cite{Zhan2017}.\\
Usually, the monotonicity and convexity preserving property of the spline $S(x)$ are discussed based on the properties of the data $u_i=u(x_i)$. Now, we discuss the properties of monotonicity and convexity of the spline $S(x)$ based on data $I_i$. We construct a family of monotone and convex $C^1$ integro cubic splines under a strictly convex position of the data set.
In this paper, we will give a simple constructive algorithm for $C^1$ integro cubic splines (or histosplines) that preserve monotonicity and convexity.
The remainder of this paper is organized as follows. In section 2, a simple method for constructing the family of $C^1$ integro cubic splines (depending on the parameter $\alpha$) is given. We discuss sufficient conditions of monotonicity and convexity of the presented integro cubic splines. We also consider an error analysis of the integro cubic splines in Section 3. Some numerical examples are given in Section 4 to illustrate the convexity preserving property.
\section{Construction of convex integro cubic splines}
Using the ideas in \cite{Zhan1420,Zhan2135} instead of inequalities in \eqref{zmcov3}, we consider the following relations:
\begin{equation}\label{zmcov4}
 \begin{split}
   \frac3{h_i}(S(x_i)-S(x_{i-1}))&=\alpha(m_{i-1}+2m_i)+(1-\alpha)(2m_{i-1}+m_i)=\\
   &= (2-\alpha)m_{i-1}+(1+\alpha)m_i, ~~~\alpha\in [0,1].
 \end{split}
\end{equation}
The right-hand side of \eqref{zmcov4} is a linear combination of $m_{i-1}+2m_i$ and $2m_{i-1}+m_i$, and is a linear function with respect to $\alpha$. Hence, from \eqref{zmcov4}, it follows when $m_{i-1}\leq m_i$ that
\begin{equation}\label{zmcov5}
  2m_{i-1}+m_i\leq \frac3{h_i}(S(x_i)-S(x_{i-1}))\leq m_{i-1}+2m_i.
\end{equation}
That is, instead of \eqref{zmcov3}, it is possible to consider
\begin{equation}\label{zmcov6}
 \begin{split}
   &\frac3{h_i}(S(x_i)-S(x_{i-1}))=(2-\alpha)m_{i-1}+(1+\alpha)m_i,\\
   &\frac3{h_i}(S(x_i)+S(x_{i-1}))=\frac6{h_i}I_i-\frac12(m_{i-1}-m_i).
 \end{split}
\end{equation}
By adding and subtracting these two equations, we get
\begin{equation}\label{zmcov7}
  S(x_i)={I_i}+\frac{h_i}{12}\{(3-2\alpha)m_{i-1}+(3+2\alpha) m_i\}, ~~~i=1,\cdots,k,
\end{equation}
\begin{equation}\label{zmcov8}
  S(x_{i-1})=I_i+\frac{h_i}{12}\{(2\alpha-5)m_{i-1}-(2\alpha+1) m_i\}, ~~~i=1,\cdots,k.
\end{equation}
 From \eqref{zmcov7} and \eqref{zmcov8}, it follows that
\begin{equation}\label{zmcov9}
  \lambda_i(3-2\alpha)m_{i-1}+\{\lambda_i(3+2\alpha)+\mu_i(5-2\alpha)\}m_i+\mu_i(2\alpha+1)m_{i+1}=6\delta I_i,  ~~~i=1,\cdots,k-1,
\end{equation}
where
\begin{equation}\label{zmcov10}
  \lambda_i =\frac{h_i}{h_i+h_{i+1}}, ~~ \mu_i = 1 - \lambda_i,~~ \delta I_{i}=\frac{I_{i+1}-I_i}{\hbar_{i}},~~\hbar_i=\frac{h_i+h_{i+1}}{2}.
 \end{equation}
 Using the Eq. \eqref{zmcov7}, \eqref{zmcov8}, and \eqref{zmcov9}, we get a closed system of equations
\begin{equation}\label{zmcov11}
 \begin{split}
   &(5-2\alpha)m_{0}+(2\alpha+1)m_1=\frac{12}{h_1}(I_1-S(x_0)),\\
   &a_im_{i-1}+c_im_i+b_im_{i+1}=6\delta I_i,  ~~~i=1,\cdots,k-1,\\
   &(3-2\alpha)m_{k-1}+(3+2\alpha)m_k=\frac{12}{h_k}(S(x_k)-{I_k}),
 \end{split}
\end{equation}
where
\begin{equation}\label{zmcov12}
a_i=\lambda_i(3-2\alpha)>0,~~b_i=\mu_i(1+2\alpha)>0,~~~c_i=a_i+b_i+4(\alpha\lambda_i+(1-\alpha)\mu_i)>a_i+b_i>0.
\end{equation}
Since,
\begin{equation*}
 \begin{split}
   &5-2\alpha-2\alpha-1=4(1-\alpha)\geq0,\\
   &\lambda_i(3+2\alpha)+\mu_i(5-2\alpha)-\lambda_i(3-2\alpha)-\mu_i(2\alpha+1)=4\lambda_i\alpha+4\mu_i(1-\alpha)>0,  ~~~i=1,\cdots,k-1,\\
   &3+2\alpha-3+2\alpha=4\alpha\geq0,
 \end{split}
\end{equation*}
 the matrix of the system \eqref{zmcov11} has diagonal dominance. Hence, the system \eqref{zmcov11} has a unique solution\\ $(m_0,m_1,\cdots,m_k)$
for each $\alpha\in[0,1]$, and it can be easily solved \textcolor{black}{by} using the tridiagonal $LU$ decomposition algorithm. Here, $S(x_0)$ and $S(x_k)$ are assumed to be given for now. The values of $S(x_i)$ are determined by means of \eqref{zmcov7} or \eqref{zmcov8}, and the spline $S$ is given by \eqref{zmcov2}. Then, $S(x)$ will be $C^1$ cubic integro splines depending on the parameter $\alpha\in[0,1]$. Thus, the family of $S(x,\alpha)$ depending on the parameter $\alpha$ is determined completely. As usual, the given data $I_i$ is called monotonically increasing if
\begin{equation}\label{zmcov31}
\delta I_i=\frac{I_{i+1}-I_i}{\hbar_{i}} \geq 0,~~~i=1,\cdots,k-1,
\end{equation}
and convex if
\begin{subequations}\label{zmcov13}
\begin{equation}\label{zmcov13a}
\frac{I_{i+1}-I_i}{\hbar_{i}}-\frac{I_{i}-I_{i-1}}{\hbar_{i-1}}\geq0,~~~i=2,\cdots,k-1,
\end{equation}
or
\begin{equation}\label{zmcov13b}
\delta I_i \geq\delta I_{i-1}.
\end{equation}
\end{subequations}
Using the Taylor expansion of $S(x)$ in \eqref{zmcov2}, we obtain
\begin{equation*}S(x_0)=I_1+\frac{h_1}{12}\{\frac{\mu_1(1+2\alpha)(2\alpha-5)(\delta I_1-\delta I_2)}{\lambda_1(3-2\alpha)}-6\delta I_1\},
\end{equation*}
\begin{equation*}S(x_k)=I_k+\frac{h_k}{12}\{\frac{\lambda_{k-1}(9-4\alpha^2)(\delta I_{k-1}-\delta I_{k-2})}{\mu_{k-1}(1+2\alpha)}+6\delta I_{k-1}\}.
\end{equation*}
To study the shape-preserving properties of \eqref{zmcov2}, one must use the derivatives of \eqref{zmcov2}, which are
\begin{equation}\label{zmcov14}
    S'(x)=6t(1-t)\frac{S(x_{i+1})-S(x_i)}{h_{i+1}}+(1-t)(1-3t)m_i+t(3t-2)m_{i+1},
\end{equation}
 and
 \begin{equation}\label{zmcov15}
    S''(x)=6(1-2t)\frac{S(x_{i+1})-S(x_i)}{h^2_{i+1}}+\frac1{h_{i+1}}\{(6t-4)m_i+(6t-2)m_{i+1}\}.
\end{equation}
 Using \eqref{zmcov6} in \eqref{zmcov14} and \eqref{zmcov15}, we obtain
\begin{equation}\label{zmcov16}
    S'(x)=(1-t)(1+(1-2\alpha)t)m_{i}+t(2\alpha+t(1-2\alpha))m_{i+1},
\end{equation}
 and
 \begin{equation}\label{zmcov17}
    S''(x)=2\frac{\alpha+t(1-2\alpha)}{h_{i+1}}(m_{i+1}-m_{i}).
\end{equation}
It is easy to show that
\begin{equation*}
    1+(1-2\alpha)t\geq0,~~~2\alpha+t(1-2\alpha)\geq 0,~~~\mbox{for } \alpha\in[0,1].
\end{equation*}
Hence, from \eqref{zmcov16}, it follows that
\begin{equation}\label{zmcov18}
    S'(x)\geq0,~~x\in[x_i,x_{i+1}]~ \mbox{ if } m_i\geq0~ \mbox{ and } m_{i+1}\geq0.
\end{equation}
 Since $\alpha+t(1-2\alpha)\geq0$ then from \eqref{zmcov17}, it follows that
  \begin{equation}\label{zmcov19}
    S''(x)\geq0,~~x\in[x_i,x_{i+1}]~ \mbox{ if } m_{i+1}-m_i\geq0.
\end{equation}
Thus, from \eqref{zmcov18}, we conclude that $S(x,\alpha)$ will monotonically increase if the solution to \eqref{zmcov11} is nonnegative.
In order to study the solution to \eqref{zmcov11}, we use the following theorem given in \cite{Kay85}.
\begin{thm}
For the system $\textbf{Ax}=\textbf{f}$, suppose that
$$a_{ij}\geq0,~~ a_{ii}>0,~~f_i>0,~~i,j=1,\cdots,k,~~ i\neq j.$$
If for all $i$, $i = 1,\cdots,k,$
$$f_{i}>\sum\limits_{j=1,~ j\neq i}^{k}a_{ij}\frac{f_j}{a_{jj}},$$
then $\textbf{A}$ is invertible, and $x_i=(\textbf{A}^{-1}\textbf{f})_i> 0$ for all $i$.
\end{thm}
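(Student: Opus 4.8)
The plan is to factor out the diagonal and reduce everything to a single nonnegative matrix. Write $\mathbf{A}=\mathbf{D}+\mathbf{R}$, where $\mathbf{D}=\mathrm{diag}(a_{11},\dots,a_{kk})$ has positive diagonal entries and $\mathbf{R}$ collects the off-diagonal part, so that $\mathbf{R}\geq 0$ entrywise by hypothesis. Set $\mathbf{J}=\mathbf{D}^{-1}\mathbf{R}\geq 0$ and $\mathbf{y}=\mathbf{D}^{-1}\mathbf{f}>0$. Then $\mathbf{A}\mathbf{x}=\mathbf{f}$ is equivalent to $(\mathbf{I}+\mathbf{J})\mathbf{x}=\mathbf{y}$, and a direct componentwise computation shows that the standing hypothesis $f_i>\sum_{j\neq i}a_{ij}f_j/a_{jj}$ is nothing but $(\mathbf{I}-\mathbf{J})\mathbf{y}>0$. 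The whole argument then rests on relating the matrix $\mathbf{I}+\mathbf{J}$ that governs the solution to the matrix $\mathbf{I}-\mathbf{J}$ that the hypothesis controls.

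First I would establish that $\mathbf{A}$ is invertible, equivalently that $-1$ is not an eigenvalue of $\mathbf{J}$, by proving the stronger statement $\rho(\mathbf{J})<1$. Put $\theta=\max_{1\leq i\leq k}(\mathbf{J}\mathbf{y})_i/y_i$; since $(\mathbf{I}-\mathbf{J})\mathbf{y}>0$ means $(\mathbf{J}\mathbf{y})_i<y_i$ for each of the finitely many indices $i$, we get $\theta<1$, i.e. $\mathbf{J}\mathbf{y}\leq\theta\mathbf{y}$. Because $\mathbf{J}\geq0$, induction gives $\mathbf{J}^{n}\mathbf{y}\leq\theta^{n}\mathbf{y}\to 0$; as $\mathbf{y}>0$ and $\mathbf{J}^n\geq0$ this forces $\mathbf{J}^n\to 0$ entrywise, hence $\rho(\mathbf{J})<1$. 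This is the place where the hypothesis does the real work, and no separate diagonal-dominance assumption is needed. Consequently $\mathbf{I}+\mathbf{J}$ is invertible, and so is $\mathbf{A}=\mathbf{D}(\mathbf{I}+\mathbf{J})$.

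With $\rho(\mathbf{J})<1$ the Neumann series $(\mathbf{I}+\mathbf{J})^{-1}=\sum_{n\geq0}(-1)^n\mathbf{J}^n$ converges absolutely. The key step is to pair its terms: grouping consecutive summands gives $(\mathbf{I}+\mathbf{J})^{-1}=\sum_{m\geq0}\mathbf{J}^{2m}(\mathbf{I}-\mathbf{J})$, a rearrangement justified by the absolute convergence. Applying this to $\mathbf{y}$ yields $\mathbf{x}=(\mathbf{I}+\mathbf{J})^{-1}\mathbf{y}=\sum_{m\geq0}\mathbf{J}^{2m}(\mathbf{I}-\mathbf{J})\mathbf{y}$. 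Every summand is a nonnegative matrix $\mathbf{J}^{2m}$ applied to the strictly positive vector $(\mathbf{I}-\mathbf{J})\mathbf{y}$, hence is $\geq 0$, while the $m=0$ term equals $(\mathbf{I}-\mathbf{J})\mathbf{y}>0$. Therefore $\mathbf{x}\geq(\mathbf{I}-\mathbf{J})\mathbf{y}>0$, which is exactly $x_i=(\mathbf{A}^{-1}\mathbf{f})_i>0$ for all $i$.

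The main obstacle is precisely the alternating sign pattern of the Neumann series: since $\mathbf{J}\geq0$ the odd powers enter with the wrong sign, so one cannot read off positivity term by term. The device that overcomes this is the identity $\mathbf{J}^{2m}-\mathbf{J}^{2m+1}=\mathbf{J}^{2m}(\mathbf{I}-\mathbf{J})$, which repackages the series into nonnegative blocks built on the single quantity $(\mathbf{I}-\mathbf{J})\mathbf{y}$ that the hypothesis renders positive. Everything else, namely the factorization $\mathbf{A}=\mathbf{D}(\mathbf{I}+\mathbf{J})$, the spectral radius bound, and the final assembly, is routine once this pairing is in place.
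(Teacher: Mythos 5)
Your proof is correct. Note first that the paper itself gives no proof of this statement: Theorem~1 is imported verbatim from Kaykobad's paper (reference \cite{Kay85} in the bibliography) and used as a black box, so there is no in-paper argument to compare yours against. On its own merits, your argument is sound and complete: the translation of the hypothesis into $(\mathbf{I}-\mathbf{J})\mathbf{y}>0$ with $\mathbf{J}=\mathbf{D}^{-1}\mathbf{R}\geq 0$ and $\mathbf{y}=\mathbf{D}^{-1}\mathbf{f}>0$ is a correct componentwise computation (multiply through by $a_{ii}>0$); the bound $\mathbf{J}\mathbf{y}\leq\theta\mathbf{y}$ with $\theta<1$ does give $\mathbf{J}^n\mathbf{y}\leq\theta^n\mathbf{y}$, hence $\mathbf{J}^n\to 0$ entrywise (using $\mathbf{y}>0$) and $\rho(\mathbf{J})<1$; and the pairing $\sum_{n\geq 0}(-1)^n\mathbf{J}^n=\sum_{m\geq 0}\mathbf{J}^{2m}(\mathbf{I}-\mathbf{J})$ legitimately converts the alternating Neumann series into a sum of nonnegative terms whose $m=0$ term is strictly positive, yielding $\mathbf{x}\geq(\mathbf{I}-\mathbf{J})\mathbf{y}>0$. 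You are also right that no diagonal-dominance assumption is needed beyond the stated hypothesis. This is a clean, self-contained route; the only stylistic remark is that the grouping of consecutive terms does not even require absolute convergence (the grouped partial sums form a subsequence of the original ones), though invoking absolute convergence is of course harmless.
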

We show that the assumptions given in Theorem 1 are fulfilled for our system \eqref{zmcov11} under conditions
\begin{subequations}\label{zmc20}
\begin{equation}\label{zmc20a}
\frac {2a_1(I_1-S(x_0))}{h_1(5-2\alpha)}+\frac {b_1\delta I_2}{c_2}< \delta I_1<\frac {2c_1(I_1-S(x_0))}{h_1(2\alpha+1)},~~I_1-S(x_0)>0,
\end{equation}
\begin{equation}\label{zmc20b}
\frac {a_j\delta I_{j-1}}{c_{j-1}}+\frac {b_j\delta I_{j+1}}{c_{j+1}}< \delta I_j,~~~j=2,\cdots,k-2,
\end{equation}
\begin{equation}\label{zmc20c}
\frac {2b_{k-1}(S(x_k)-I_k)}{h_k(3+2\alpha)}+\frac {a_{k-1}\delta I_{k-2}}{c_{k-2}}< \delta I_{k-1}<\frac {2c_{k-1}(S(x_k)-I_k)}{h_k(3-2\alpha)},~~S(x_k)-I_k>0.
\end{equation}
\end{subequations}
Let us summarize the obtained above results as:
\begin{thm}
Let the integro cubic splines $S(x,\alpha)\in C^1[a,b]$ be defined by \eqref{zmcov2}, \eqref{zmcov7}, and \eqref{zmcov11}, and the data $I_i$ monotonically increase. If the inequalities \eqref{zmc20} are valid then $m_i>0$ for all $i=0,\cdots,k$ and thereby $S'(x)>0$ on $[x_0,x_{k}]$, that is, $S$ is monotonically increasing on $[a,b]$.
\end{thm}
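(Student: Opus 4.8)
The plan is to apply Theorem 1 directly to the tridiagonal system \eqref{zmcov11}, regarded as $\mathbf{Ax}=\mathbf{f}$ with unknown vector $\mathbf{x}=(m_0,m_1,\ldots,m_k)^T$. First I would read off the matrix entries and right-hand side from the three blocks of \eqref{zmcov11}: the top row gives $A_{00}=5-2\alpha$, $A_{01}=2\alpha+1$, $f_0=\frac{12}{h_1}(I_1-S(x_0))$; the generic interior row $i$ (for $1\le i\le k-1$) gives $A_{i,i-1}=a_i$, $A_{ii}=c_i$, $A_{i,i+1}=b_i$, $f_i=6\delta I_i$; and the bottom row gives $A_{k,k-1}=3-2\alpha$, $A_{kk}=3+2\alpha$, $f_k=\frac{12}{h_k}(S(x_k)-I_k)$.

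Next I would verify the structural hypotheses of Theorem 1. All off-diagonal entries are nonnegative and all diagonal entries are positive, which follows from \eqref{zmcov12} together with $5-2\alpha>0$, $2\alpha+1>0$, $3-2\alpha>0$, $3+2\alpha>0$ for $\alpha\in[0,1]$. The positivity $f_i>0$ is where the standing assumptions enter: the boundary components $f_0,f_k$ are positive by the hypotheses $I_1-S(x_0)>0$ and $S(x_k)-I_k>0$ in \eqref{zmc20a} and \eqref{zmc20c}, while for the interior components I would combine monotonicity \eqref{zmcov31}, which gives $\delta I_i\ge0$, with \eqref{zmc20}: each inequality there bounds $\delta I_i$ strictly below by a nonnegative quantity, and hence forces $\delta I_i>0$.

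The heart of the argument is checking the dominance condition $f_i>\sum_{j\ne i}A_{ij}\,f_j/A_{jj}$ for every row. I would carry this out row by row and observe that, after cancelling the common factor $6$ and the step sizes, each such inequality collapses exactly onto one of the conditions in \eqref{zmc20}. Concretely, the top row reduces to the right-hand inequality of \eqref{zmc20a}, row $1$ to its left-hand inequality, the interior rows $2\le j\le k-2$ to \eqref{zmc20b}, row $k-1$ to the left-hand inequality of \eqref{zmc20c}, and the bottom row to its right-hand inequality. In other words, \eqref{zmc20} is precisely the transcription of the Theorem 1 hypothesis for our system, so no additional estimate is required.

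With all hypotheses in force, Theorem 1 yields that $\mathbf{A}$ is invertible and $m_i=(\mathbf{A}^{-1}\mathbf{f})_i>0$ for all $i=0,\ldots,k$. Finally I would feed this into the sign analysis already established: by \eqref{zmcov16} the coefficients of $m_i$ and $m_{i+1}$ in $S'(x)$ are nonnegative on $[0,1]$, and the leading term $(1-t)(1+(1-2\alpha)t)m_i$ is strictly positive for $t\in[0,1)$ while $S'(x_{i+1})=m_{i+1}>0$ at $t=1$; hence $S'(x)>0$ throughout each subinterval, and therefore on all of $[a,b]$. I expect the only mildly delicate point to be the bookkeeping that matches the abstract dominance inequalities to \eqref{zmc20}; there is no genuine analytic obstacle here, since the statement is essentially a direct specialization of Theorem 1.
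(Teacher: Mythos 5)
Your proposal is correct and follows exactly the route the paper intends: it identifies system \eqref{zmcov11} as $\mathbf{Ax}=\mathbf{f}$, verifies via \eqref{zmcov12} and \eqref{zmcov31} that the sign hypotheses of Theorem~1 hold, checks row by row that the dominance condition is precisely the transcription given in \eqref{zmc20}, and then concludes $S'>0$ from \eqref{zmcov16}. In fact you supply more detail than the paper does (notably the observation that \eqref{zmc20} together with monotonicity forces $\delta I_i>0$, and the strictness of $S'(x)>0$ on each subinterval), so nothing is missing.
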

Now, we proceed to study the convexity property of $S(x,\alpha)$. To this end, we pass from \eqref{zmcov11} to the following system
\begin{subequations}\label{zmc222}
 \begin{equation}\label{zmc22a}
   (2\alpha+1)(m_1-m_0)=\frac{12}{h_1}(I_1-S(x_0))-6m_0,
 \end{equation}
\begin{equation}\label{zmc22b}
 \begin{split}
   a_i(m_{i-1}-m_{i-2})&+c_i(m_{i}-m_{i-1})+b_i(m_{i+1}-m_{i})=6(\delta I_i-\delta I_{i-1})+\\
   &(a_{i-1}-a_{i})m_{i-2}+(c_{i-1}-c_{i})m_{i-1}+(b_{i-1}-b_{i})m_{i},  ~~~i=2,\cdots,k-1,
 \end{split}
 \end{equation}
\begin{equation}\label{zmc22c}
   (3+2\alpha)(m_k-m_{k-1})=\frac{12}{h_k}(S(x_k)-I_k)-6m_{k-1}.
 \end{equation}
 \end{subequations}
If the following equalities hold:
\begin{equation}\label{zmcov23''}
  a_{i-1}-a_{i}=0,~~c_{i-1}-c_{i}=0,~~b_{i-1}-b_{i}=0,
 \end{equation}
 then the equation \eqref{zmc22b} for $i=2,\cdots,k-1$ leads to
\begin{equation}\label{zmcov11_2}
   a_i(m_{i-1}-m_{i-2})+c_i(m_{i}-m_{i-1})+b_i(m_{i+1}-m_{i})=6(\delta I_i-\delta I_{i-1}),~~~i=2,\cdots,k-1.
 \end{equation}
As above, it is easy to verify that the assumptions of Theorem 1 are fulfilled for the system \eqref{zmc22a}, \eqref{zmc22c}, and \eqref{zmcov11_2} under conditions
\begin{subequations}\label{zmc20_1}
\begin{equation}\label{zmc20_1a}
\frac {a_2(\frac{2(I_1-S(x_0))}{h_1}-m_0)}{2\alpha+1}+\frac {b_2(\delta I_3-\delta I_2)}{c_3}< \delta I_2-\delta I_1,
\end{equation}
\begin{equation}\label{zmc20_1b}
\frac {a_j(\delta I_{j-1}-\delta I_{j-2})}{c_{j-1}}+\frac {b_j(\delta I_{j+1}-\delta I_{j})}{c_{j+1}}< \delta I_j-\delta I_{j-1},~~~j=3,\cdots,k-2,
\end{equation}
\begin{equation}\label{zmc20_1c}
\frac {b_{k-1}(\frac{2}{h_k}(S(x_k)-I_k)-m_{k-1})}{3+2\alpha}+\frac {a_{k-1}(\delta I_{k-2}-\delta I_{k-3})}{c_{k-2}}< \delta I_{k-1}-\delta I_{k-2},
\end{equation}
\end{subequations}
where $\frac{2}{h_1}(I_1-S(x_0))-m_0>0$ and $\frac{2}{h_k}(S(x_k)-I_k)-m_{k-1}>0$. Thus, we have:
\begin{thm}
Let the integro cubic splines $S(x,\alpha)\in C^1[a,b]$ be defined by \eqref{zmcov2}, \eqref{zmcov7}, and \eqref{zmcov11}, and the data $I_i$ are convex, and $m_0$ and $m_{k-1}$ are given. If \eqref{zmcov23''} and \eqref{zmc20_1} are valid then $m_i-m_{i-1}>0$ for all $i=1,\cdots,k$ and thereby $S''(x)>0$ on $[x_0,x_{k}]$, that is, $S(x)$ is  convex on $[a,b]$.
\end{thm}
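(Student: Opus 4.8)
The plan is to reduce the convexity of $S$ to the single sign condition $m_i-m_{i-1}>0$ for every $i$, and then to obtain that sign condition by applying Theorem 1 to the differenced system. For the reduction, recall from \eqref{zmcov17} that on each subinterval $[x_i,x_{i+1}]$ we have $S''(x)=2\frac{\alpha+t(1-2\alpha)}{h_{i+1}}(m_{i+1}-m_i)$, where the factor $\alpha+t(1-2\alpha)$ is nonnegative for $\alpha\in[0,1]$, $t\in[0,1]$ and positive in the interior. Hence, as already recorded in \eqref{zmcov19}, once $m_{i+1}-m_i\ge0$ holds for all $i=0,\dots,k-1$ it follows that $S''(x)\ge0$ throughout $[a,b]$ (and $>0$ off the knots), so $S$ is convex. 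Thus it suffices to prove $d_i:=m_i-m_{i-1}>0$ for $i=1,\dots,k$.

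First I would pass from the original system \eqref{zmcov11} to the system \eqref{zmc222} by subtracting consecutive interior equations; this is precisely where the hypothesis \eqref{zmcov23''} enters. Under \eqref{zmcov23''} the remainder terms $(a_{i-1}-a_i)m_{i-2}+(c_{i-1}-c_i)m_{i-1}+(b_{i-1}-b_i)m_i$ in \eqref{zmc22b} vanish, leaving the clean relations \eqref{zmcov11_2}. Together with the two boundary equations \eqref{zmc22a} and \eqref{zmc22c}, this yields a closed linear system $\mathbf{A}\mathbf{d}=\mathbf{f}$ in the $k$ unknowns $d_1,\dots,d_k$, whose interior rows read $a_id_{i-1}+c_id_i+b_id_{i+1}=6(\delta I_i-\delta I_{i-1})$ for $i=2,\dots,k-1$, whose first row is $(2\alpha+1)d_1=\frac{12}{h_1}(I_1-S(x_0))-6m_0$, and whose last row is $(3+2\alpha)d_k=\frac{12}{h_k}(S(x_k)-I_k)-6m_{k-1}$. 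Because $m_0$ and $m_{k-1}$ are given, the two boundary rows carry no off-diagonal entry and therefore decouple.

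Next I would verify the hypotheses of Theorem 1 for $\mathbf{A}\mathbf{d}=\mathbf{f}$. The off-diagonal entries $a_i=\lambda_i(3-2\alpha)$, $b_i=\mu_i(1+2\alpha)$ are positive and the diagonal entries $c_i$, $2\alpha+1$, $3+2\alpha$ are positive by \eqref{zmcov12}, so the sign requirements on $\mathbf{A}$ hold. The entries of $\mathbf{f}$ are positive as well: convexity \eqref{zmcov13b} makes the left-hand sides of \eqref{zmc20_1} nonnegative, so those inequalities force $\delta I_i>\delta I_{i-1}$ and hence the interior right-hand sides $6(\delta I_i-\delta I_{i-1})$ positive, while the two boundary entries are positive by the stated assumptions $\frac{2}{h_1}(I_1-S(x_0))-m_0>0$ and $\frac{2}{h_k}(S(x_k)-I_k)-m_{k-1}>0$. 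It then remains to check the diagonal-comparison inequalities $f_i>\sum_{j\neq i}A_{ij}f_j/A_{jj}$: for the two boundary rows these hold trivially, and for the interior rows $i=2,\dots,k-1$ they are, after dividing by $6$, exactly \eqref{zmc20_1a}, \eqref{zmc20_1b} and \eqref{zmc20_1c}. Applying Theorem 1 then gives $d_i=(\mathbf{A}^{-1}\mathbf{f})_i>0$ for all $i$, which with the reduction above yields $S''\ge0$, i.e. $S$ is convex.

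The main obstacle I anticipate is bookkeeping rather than conceptual. One must confirm that differencing \eqref{zmcov11} produces precisely the remainder displayed in \eqref{zmc22b}, so that \eqref{zmcov23''} is exactly what removes it, and then match the Theorem 1 comparison condition row by row with \eqref{zmc20_1}, paying particular attention to the boundary rows $i=1,k$ (which decouple) and to the adjacent rows $i=2,k-1$, whose neighbours involve a boundary quantity such as $\frac{2}{h_1}(I_1-S(x_0))-m_0$ in place of a difference of the $\delta I$. Some care is also needed at the endpoints of the convexity conclusion, where the factor $\alpha+t(1-2\alpha)$ in \eqref{zmcov17} can vanish (for instance at $t=0$ when $\alpha=0$); there $S''$ vanishes only at isolated knots, which does not affect strict convexity on $[a,b]$.
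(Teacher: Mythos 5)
Your proposal is correct and follows essentially the same route as the paper: it passes from \eqref{zmcov11} to the differenced system \eqref{zmc222}, uses \eqref{zmcov23''} to eliminate the remainder terms and obtain \eqref{zmcov11_2}, verifies the hypotheses of Theorem 1 row by row against \eqref{zmc20_1} to get $m_i-m_{i-1}>0$, and concludes convexity via \eqref{zmcov17}--\eqref{zmcov19}. Your explicit observations that convexity of the data makes the right-hand sides strictly positive and that the boundary rows decouple are details the paper leaves implicit, but they do not change the argument.
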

\noindent Note that the equalities \eqref{zmcov23''} hold true if the step sizes of grid satisfy
\begin{equation}\label{zmcov26''}
h_{i}=\sqrt{h_{i-1}h_{i+1}},~~~i=1,\cdots,k-1.
\end{equation}
Of course, the conditions \eqref{zmcov26''} are fulfilled on a uniform partition.
Now, we are interested in the dependence of $m_i$ on parameter $\alpha$. To this end, differentiating the system \eqref{zmcov11} with respect to $\alpha$, we obtain
\begin{equation}\label{zmcov33}
 \begin{split}
   &(5-2\alpha)m'_{0}(\alpha)+(2\alpha+1)m'_1(\alpha)=2(m_0-m_1),\\
   &a_im'_{i-1}(\alpha)+c_im'_i(\alpha)+b_im'_{i+1}(\alpha)=2\lambda_i(m_{i-1}-m_i)+2\mu_i(m_i-m_{i+1}),  ~~~i=1,\cdots,k-1,\\
   &(3-2\alpha)m'_{k-1}(\alpha)+(3+2\alpha)m'_k(\alpha)=2(m_{k-1}-m_k).
 \end{split}
\end{equation}
From \eqref{zmcov33}, it is clear that the right-hand side of the system \eqref{zmcov33} is negative if \eqref{zmcov23''} and \eqref{zmc20_1} are fulfilled. 
\begin{thm}
Assume that \eqref{zmcov23''} and \eqref{zmc20_1} are fulfilled. Then, $m_i(\alpha)$ is a decreasing function with respect to $\alpha$ for all $i=1,\cdots,k-1$, i.e.,
\begin{equation}\label{zmcov38}
m_{i}(0)\geq m_{i}(\alpha)\geq m_{i}(1),~~~i=1,\cdots,k-1.
\end{equation}
\end{thm}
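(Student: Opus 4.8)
The plan is to prove \eqref{zmcov38} by showing that $m_i'(\alpha)\le 0$ throughout $[0,1]$ and then invoking monotonicity in $\alpha$. Everything is read off the differentiated system \eqref{zmcov33}, whose coefficient matrix is \emph{identical} to that of \eqref{zmcov11}: its interior rows carry $a_i,c_i,b_i$ from \eqref{zmcov12} and its two boundary rows carry $(5-2\alpha,\,2\alpha+1)$ and $(3-2\alpha,\,3+2\alpha)$. Hence this matrix, call it $\mathbf{A}$, is strictly diagonally dominant with positive diagonal and nonnegative off-diagonal entries, i.e. it has exactly the sign pattern required by Theorem 1.

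First I would fix the sign of the right-hand side of \eqref{zmcov33}. The hypotheses \eqref{zmcov23''} and \eqref{zmc20_1} are precisely those of Theorem 3, so that theorem applies at each $\alpha\in[0,1]$ and gives $m_{i-1}(\alpha)<m_i(\alpha)$ for every $i$; the slopes are strictly increasing in $i$. Feeding this into \eqref{zmcov33} makes each right-hand entry $2(m_0-m_1)$, $2\lambda_i(m_{i-1}-m_i)+2\mu_i(m_i-m_{i+1})$, and $2(m_{k-1}-m_k)$ strictly negative. Thus \eqref{zmcov33} reads $\mathbf{A}\,\mathbf{m}'(\alpha)=\mathbf{g}(\alpha)$ with every component of $\mathbf{g}(\alpha)$ negative.

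Next I would negate the system and apply Theorem 1 to $\mathbf{A}\,\bigl(-\mathbf{m}'(\alpha)\bigr)=-\mathbf{g}(\alpha)$, whose right-hand side $-\mathbf{g}(\alpha)$ is now strictly positive. The sign conditions on the entries of $\mathbf{A}$ hold by \eqref{zmcov12}, so the only thing left to check is the dominance hypothesis $f_i>\sum_{j\neq i}a_{ij}f_j/a_{jj}$ of Theorem 1 for $f=-\mathbf{g}(\alpha)$. This is the step I expect to be the main obstacle, because inverse-positivity cannot be taken for granted: with nonnegative off-diagonals, $\mathbf{g}<\mathbf{0}$ alone does \emph{not} force $\mathbf{m}'(\alpha)<\mathbf{0}$, and the dominance inequalities must genuinely be verified. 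Under \eqref{zmcov23''} the coefficients $a_i,b_i,c_i$ and the weights $\lambda_i,\mu_i$ are constant in $i$, so each $-g_i$ is a fixed positive combination $2\lambda(m_i-m_{i-1})+2\mu(m_{i+1}-m_i)$ of the consecutive slope gaps; rewriting those gaps via \eqref{zmcov11_2} in terms of the convexity differences $\delta I_i-\delta I_{i-1}$ should bring the required inequalities into the same form as \eqref{zmc20_1}, exactly mirroring the passages from \eqref{zmcov11} to \eqref{zmc20} and from \eqref{zmcov11_2} to \eqref{zmc20_1}. Once Theorem 1 yields $-m_i'(\alpha)>0$, that is $m_i'(\alpha)<0$, for all interior $i$ and all $\alpha\in[0,1]$, I would finish by integrating: $m_i(\alpha)-m_i(0)=\int_0^\alpha m_i'(s)\,ds\le 0$ and $m_i(1)-m_i(\alpha)=\int_\alpha^1 m_i'(s)\,ds\le 0$ give $m_i(0)\ge m_i(\alpha)\ge m_i(1)$, which is \eqref{zmcov38}.
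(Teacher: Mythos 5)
Your proposal follows essentially the same route as the paper: the paper's entire proof is the observation that, by Theorem 3, the right-hand side of \eqref{zmcov33} is negative, so that Theorem 1 (applied to the negated system, whose matrix is the same as that of \eqref{zmcov11}) gives $m_i'(\alpha)<0$, whence \eqref{zmcov38}. The one step you single out as the ``main obstacle'' --- verifying Kaykobad's dominance hypothesis $f_i>\sum_{j\neq i}a_{ij}f_j/a_{jj}$ for the new right-hand side $-\mathbf{g}(\alpha)$, which is genuinely needed since a matrix with nonnegative off-diagonal entries is not inverse-positive --- is in fact not carried out in the paper either; the published proof simply cites ``Theorems 1 and 3'' and moves on. So your concern is well placed and your write-up is, if anything, more honest about where the argument is incomplete; to fully close it one would have to show that the hypotheses \eqref{zmcov23''} and \eqref{zmc20_1} (which control the differences $\delta I_i-\delta I_{i-1}$ and hence, via \eqref{zmcov11_2}, the gaps $m_i-m_{i-1}$ entering $-\mathbf{g}$) actually imply the dominance inequalities for $-\mathbf{g}(\alpha)$, a verification that remains to be done in both your version and the paper's.
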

\begin{proof}
By Theorems 1 and 3, the solution  to system \eqref{zmcov33} is negative, that is, $m'_i(\alpha)<0$ for all $i=1,\cdots,k-1$. Hence, \eqref{zmcov38} is valid.
\end{proof}
Thus, we obtain feasible intervals $[m_i(1),m_i(0)]$ of $m_i(\alpha)$ that ensure the monotonicity and convexity of splines $S(x,\alpha)$. From \eqref{zmcov7} and \eqref{zmcov38}, we also derive the interval of $S(x_i,\alpha)$:
\begin{equation}\label{zmcov39}
S(x_i,\alpha)\in [I_i+\frac{h_i}2 m_{i-1}(1),I_i+ \frac{h_i}2m_{i}(0)],~~~i=1(1)k-1.
\end{equation}
\begin{thm}
Let the assumptions of Theorem 3 be fulfilled and $m_0>0$. Then, $S(x_i)$ are in strictly convex positions as the data $I_i$, that is,
\begin{equation}\label{zmcov40}
\delta S(x_i)>\delta S(x_{i-1})\geq 0,~~~i=1,\cdots,k-1.
\end{equation}
\end{thm}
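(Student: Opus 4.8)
The plan is to reduce the claim to the slope differences $m_i-m_{i-1}$, which Theorem 3 already controls, by exploiting the first relation in \eqref{zmcov6}. Writing the divided difference of the nodal values over the natural spacing $x_{i+1}-x_i=h_{i+1}$ as $\delta S(x_i)=\frac{S(x_{i+1})-S(x_i)}{h_{i+1}}$ --- the analogue for the point data $S(x_i)$ of $\delta I_i$ for the cell averages --- the first equation of \eqref{zmcov6} gives immediately $\delta S(x_i)=\frac{1}{3}\big((2-\alpha)m_i+(1+\alpha)m_{i+1}\big)$ for $i=0,\dots,k-1$. Thus the divided differences of the nodal data are expressed purely through the slopes $m_j$, so that the convexity of the $S(x_i)$ becomes a statement about the $m_j$.

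Next I would form the second difference. Subtracting the expression for $\delta S(x_{i-1})$ from that for $\delta S(x_i)$ and regrouping yields
\[
\delta S(x_i)-\delta S(x_{i-1})=\frac{1}{3}\big((2-\alpha)(m_i-m_{i-1})+(1+\alpha)(m_{i+1}-m_i)\big).
\]
Since $\alpha\in[0,1]$, both coefficients $2-\alpha$ and $1+\alpha$ are strictly positive; and by Theorem 3, whose hypotheses \eqref{zmcov23''} and \eqref{zmc20_1} are in force, each of $m_i-m_{i-1}$ and $m_{i+1}-m_i$ is strictly positive for $i=1,\dots,k-1$. Hence the right-hand side is strictly positive, which is precisely $\delta S(x_i)>\delta S(x_{i-1})$.

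It then remains to establish the lower bound $\delta S(x_{i-1})\geq 0$, and this is where the extra hypothesis $m_0>0$ enters: combined with the strict monotonicity $m_j>m_{j-1}$ from Theorem 3 it propagates to $m_j>0$ for every $j=0,\dots,k$. Substituting these positive slopes into $\delta S(x_{i-1})=\frac{1}{3}\big((2-\alpha)m_{i-1}+(1+\alpha)m_i\big)$ gives $\delta S(x_{i-1})>0$, and in particular $\geq 0$, so that \eqref{zmcov40} holds for all $i=1,\dots,k-1$.

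I do not anticipate a genuine obstacle here: the whole argument is a sign analysis once \eqref{zmcov6} has linearized the nodal divided differences in the $m_j$. The only point needing care is the index bookkeeping --- the second-difference identity requires $m_{i-1},m_i,m_{i+1}$ and so is asserted only for $i\leq k-1$, which matches the stated range, while the monotonicity supplied by Theorem 3 covers exactly the consecutive differences that appear. After that, both assertions of the theorem follow at once from the positivity of the $m_j$ and of their successive differences.
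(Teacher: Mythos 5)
Your proposal is correct and follows essentially the same route as the paper: both express $\delta S(x_i)=\frac13\{(2-\alpha)m_i+(1+\alpha)m_{i+1}\}$ via \eqref{zmcov6}, deduce $\delta S(x_i)>\delta S(x_{i-1})$ from the positivity of the slope differences supplied by Theorem 3, and obtain $\delta S(x_{i-1})\geq 0$ from $m_0>0$ together with the increasing $m_j$. The only cosmetic difference is that the paper bounds $\delta S(x_i)\geq m_i\geq 0$ directly rather than writing out the second difference as a single regrouped identity.
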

\begin{proof}
By \eqref{zmcov4} and Theorem 3, we have
\begin{equation*}
\delta S(x_i)=\frac{S(x_{i+1})-S(x_i)}{h_{i+1}}=\frac13 \{(2-\alpha)m_i+(1+\alpha)m_{i+1}\}\geq m_i\geq0.
\end{equation*}
By Theorem 3, we have
\begin{equation*}
m_{i+1}- m_i>0,~~~i=0,\cdots,k-1,
\end{equation*}
\begin{equation*}
m_{i}- m_{i-1}>0,~~~i=1,\cdots,k.
\end{equation*}
 From this, we obtain
 \begin{equation*}
(1+\alpha)(m_{i+1}- m_i)+(2-\alpha)(m_i-m_{i-1})>0,
\end{equation*}
 which leads to
 \begin{equation}\label{zmcov41}
(1+\alpha)m_{i+1}+(2-\alpha)m_i>(1+\alpha)m_{i}+(2-\alpha)m_{i-1},~~~i=1,\cdots,k-1.
\end{equation}
Using \eqref{zmcov6} in \eqref{zmcov41}, we get \eqref{zmcov40}.
\end{proof}
The well-known convex interval interpolation problem was solved by three-term staircase algorithm in \cite{Mul1999}. From \eqref{zmcov13} and \eqref{zmcov39}, one can easily see that we solved the convex interval interpolation problem $S(x_i)\in [l_i,\upsilon_i]$, $i=0,\cdots,k,$ for a particular case with $l_i=I_i+\frac{h_i}2m_{i-1}(1),~~\upsilon_i=I_i+\frac{h_i}2m_{i}(0)$.

\section{Error analysis}
Now, we consider the approximation properties of convex integro cubic splines $S(x,\alpha)$. Using the Taylor expansion of $u\in C^3[a,b]$, one can easily obtain
\begin{equation}\label{zmcov44}
\delta I_{i}=u'_i+\frac{h_{i+1}-h_i}{3}u''_i+O(\bar{h}^2),
\end{equation}
\begin{equation}\label{zmcov45}
I_{i}=u_i-\frac{h_{i}}{2}u'_i+\frac{h^2_{i}}{6}u''_i+O(\bar{h}^3),
\end{equation}
where $\bar{h}=\max\limits_{1\leq i \leq k}h_i$.\\
From \eqref{zmcov44}, it is clear that
\begin{equation}\label{zmcov46}
\delta I_{i}=u'_i+O(\bar{h}^2),
\end{equation}
under condition
\begin{equation}\label{zmcov47}
h_{i+1}-h_i=O(\bar{h}^2).
\end{equation}
\begin{thm}\label{zmthm6}
Let $S(x,\alpha)$ be $C^1$ integro cubic splines defined by \eqref{zmcov2}, \eqref{zmcov7}, \eqref{zmcov11}, and $S(x_i)=u_i+O(\bar{h}^3),$ $i=0,k$. Then, for $u\in C^3$, we have estimations
\begin{equation}\label{zmcov48}
S^{(r)}(x_i)-u^{(r)}_i=O(\bar{h}^{\sigma+1-r}),~~r=0,1,~~i=0,\cdots,k.
\end{equation}
under \eqref{zmcov47}. Here $\sigma=1$ when $\alpha\neq \frac12$ and $\sigma=2$ when $\alpha=\frac12$.
\end{thm}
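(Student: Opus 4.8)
The plan is to treat the linear system \eqref{zmcov11} as an approximate collocation scheme for the exact derivative values $u'_i$ and to control the resulting discretization error through the inverse of its (diagonally dominant) coefficient matrix. Let $A$ be the tridiagonal matrix of \eqref{zmcov11} with entries $a_i,c_i,b_i$ from \eqref{zmcov12}, let $\mathbf m=(m_0,\dots,m_k)^\top$ be its solution, and set $e_i=m_i-u'_i$. Substituting the exact values $u'_i$ into \eqref{zmcov11} yields a residual vector $\mathbf R$ with interior components $R_i:=a_iu'_{i-1}+c_iu'_i+b_iu'_{i+1}-6\,\delta I_i$ together with the two analogous boundary expressions built from the first and third rows. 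Since $A\mathbf m=\mathbf f$ while $A\mathbf u'=\mathbf f+\mathbf R$, the error satisfies $A\mathbf e=-\mathbf R$, so $\|\mathbf e\|_\infty\le\|A^{-1}\|_\infty\,\|\mathbf R\|_\infty$. Everything then reduces to bounding $\|\mathbf R\|_\infty$ and $\|A^{-1}\|_\infty$.

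First I would compute $\mathbf R$ by Taylor expansion via \eqref{zmcov44} and \eqref{zmcov45}. Because $a_i+b_i+c_i=6$ (immediate from \eqref{zmcov12} and $\lambda_i+\mu_i=1$), the $u'_i$ term cancels. The decisive computation is the coefficient of $u''_i$: inserting $\lambda_ih_i=h_i^2/(h_i+h_{i+1})$, $\mu_ih_{i+1}=h_{i+1}^2/(h_i+h_{i+1})$ and the $u''$ contribution $\tfrac{h_{i+1}-h_i}{3}$ of $\delta I_i$, this coefficient collapses to $(2\alpha-1)\dfrac{h_i^2+h_{i+1}^2}{h_i+h_{i+1}}$, while the boundary residual reduces to $(2\alpha-1)h_1u''_0+O(\bar h^2)$, where the hypothesis $S(x_0)=u_0+O(\bar h^3)$ is exactly what is needed to discard $I_1-S(x_0)$ to the right order. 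Thus every component of $\mathbf R$ carries the factor $(2\alpha-1)$ at order $\bar h$, giving $\|\mathbf R\|_\infty=O(\bar h)$ for $\alpha\neq\tfrac12$, whereas at $\alpha=\tfrac12$ this leading term vanishes identically and only the $u'''$ term survives, leaving $\|\mathbf R\|_\infty=O(\bar h^2)$. This is precisely the origin of the dichotomy $\sigma=1$ versus $\sigma=2$, and condition \eqref{zmcov47} is used to absorb the stray $h_{i+1}-h_i$ factors in $\delta I_i$.

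For the second bound I would invoke the strict diagonal dominance established after \eqref{zmcov12}: each interior row exceeds its off-diagonal sum by $4(\alpha\lambda_i+(1-\alpha)\mu_i)$, and the boundary rows by $4(1-\alpha)$ and $4\alpha$. On a mesh of bounded ratio these gaps stay bounded below by a fixed $\tau>0$, so $\|A^{-1}\|_\infty\le 1/\tau=O(1)$ and hence $m_i-u'_i=O(\bar h^\sigma)$, which is the case $r=1$ of \eqref{zmcov48} since $S'(x_i)=m_i$. For $r=0$ I would feed this back into \eqref{zmcov7}: expanding $I_i$ by \eqref{zmcov45} and replacing $m_{i-1},m_i$ by $u'_{i-1},u'_i$ up to $O(\bar h^\sigma)$ errors (which enter \eqref{zmcov7} multiplied by $h_i/12$, i.e.\ as $O(\bar h^{\sigma+1})$), the $u'_i$ terms cancel and the $u''_i$ coefficient reduces to $\tfrac{(2\alpha-1)h_i^2}{12}$. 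Hence $S(x_i)-u_i=\tfrac{(2\alpha-1)h_i^2}{12}u''_i+O(\bar h^{\sigma+1})$, which is $O(\bar h^2)$ for $\alpha\neq\tfrac12$ and $O(\bar h^3)$ for $\alpha=\tfrac12$; together with $S(x_i)-u_i=O(\bar h^3)$ at $i=0,k$ this closes the $r=0$ estimate.

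I expect the main obstacle to be the uniform bound $\|A^{-1}\|_\infty=O(1)$ rather than the residual computation. The diagonal-dominance gaps in the two boundary rows degenerate to $0$ at the extreme parameters $\alpha=1$ (first row) and $\alpha=0$ (last row), so a crude $1/\tau$ argument fails there; one must instead rely on irreducible diagonal dominance of the tridiagonal system, or estimate the boundary components of $A^{-1}$ separately, and one must make the implicit mesh-regularity assumption (consistent with \eqref{zmcov47}) explicit so that $\lambda_i,\mu_i$ remain bounded away from $0$ and $1$. The remaining delicate point is the bookkeeping of Taylor remainders under the sole hypothesis $u\in C^3$, ensuring that every discarded term is genuinely $O(\bar h^2)$ with constants independent of $i$.
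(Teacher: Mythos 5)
Your proposal follows essentially the same route as the paper: it passes to the error system for $q_i=m_i-u'_i$ (your $A\mathbf e=-\mathbf R$ is exactly the paper's system \eqref{zmcov48'} with $d_i=-R_i$), shows the residual is $O(\bar h^{\sigma})$ via \eqref{zmcov44}--\eqref{zmcov45} with the $(2\alpha-1)$ factor explaining the dichotomy, and then substitutes back into \eqref{zmcov7} for the $r=0$ case. Your computations check out, and you are in fact more explicit than the paper on the one point it leaves implicit, namely the uniform bound on $\|A^{-1}\|_\infty$ and its degeneracy in the boundary rows at $\alpha=0,1$.
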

\begin{proof}
First, let us estimate $q_i=m_i-u'_i$, $i=0,\cdots,k$. To this end, we pass from \eqref{zmcov11} to the system
\begin{equation}\label{zmcov48'}
 \begin{split}
   &(5-2\alpha)q_{0}+(2\alpha+1)q_1=d_0,\\
   &a_iq_{i-1}+c_iq_i+b_iq_{i+1}=d_i, ~~i=1,\cdots,k-1,\\
   &(3-2\alpha)q_{k-1}+(3+2\alpha)q_k=d_k,
 \end{split}
\end{equation}
where
\begin{equation}\label{zmcov49}
 \begin{split}
   &d_0=\frac{12}{h_1}(I_1-S(x_0))-\{ (5-2\alpha)u'_{0}+(2\alpha+1)u'_1\},\\
   &d_i=6\delta I_i-\{a_iu'_{i-1}+c_iu'_i+b_iu'_{i+1}\}),\\
   &d_k=\frac{12}{h_k}(S(x_k)-I_k)-\{ (3-2\alpha)u'_{k-1}+(3+2\alpha)u'_k\}.
 \end{split}
\end{equation}
Using \eqref{zmcov45}, \eqref{zmcov46}, \eqref{zmcov47}, and the Taylor expansion of function $u\in C^3[a,b]$ in \eqref{zmcov49}, one can easily obtain \begin{equation}\label{zmcov50}
d_i=O(\bar{h}^{\sigma}).
\end{equation}
Then, from \eqref{zmcov48'} and \eqref{zmcov50}, it follows \eqref{zmcov48} for $r=1$. From \eqref{zmcov7}, we get
\begin{equation}\label{zmcov51}
 \begin{split}
S(x_i)-u_i&=I_i-u_i+\frac{h_i}{12}\big\{(3-2\alpha)(m_{i-1}-u'_{i-1})+(3+2\alpha)(m_i-u'_i)\big\}\\
   &+\frac{h_i}{12}\big\{(3-2\alpha)u'_{i-1}+(3+2\alpha)u'_i\big\},~~i=1,\cdots,k-1.
 \end{split}
\end{equation}
As above, using \eqref{zmcov45}, \eqref{zmcov48} for $r=1$ and the Taylor expansion of function $u\in C^3[a,b]$ in \eqref{zmcov51}, we obtain
\begin{equation*}
S(x_i)-u_i=O(\bar{h}^{\sigma+1}),~~~i=1,\cdots,k-1,
\end{equation*}
i.e., the estimate \eqref{zmcov48} is proved for $r=0$. This completes the proof of Theorem 6.
\end{proof}
Using \eqref{zmcov46} and \eqref{zmcov48} for $r=1$, it is easy to show that
\begin{equation}\label{zmcov52}
S''_{i+0}-S''_{i-0}=O(\bar{h}^{\sigma-1}),~~~i=1,\cdots,k-1.
\end{equation}
From the estimations \eqref{zmcov48} and \eqref{zmcov52}, it is clear that the best or optimal $C^1$ integro cubic spline (abbr. OCICS) is derived when $\alpha=\frac12$ in the sense of approximation properties. This selection shows that using an optimal choice of parameter one can rise the order of approximation.
\section{Numerical experiments}
\noindent In this section, we apply the proposed method to some numerical examples.

Example 1. We consider the histogram $I=\{1,2,4\}$ on $\Delta_3 =\{0<4<6<7\}$ in \cite{Schm94}. A convex integro cubic spline (abbr. CICS) curve with $\alpha=0.5$ is shown in Fig. 1, and with $\alpha=1$ is shown in Fig. 2. 

Example 2. Next, we take $u(x) = 2-\sqrt{x(2 -x)}$, $0\leq x \leq 2$ \cite{Kva12}. This function is approximated by the CICS on a uniform mesh in $x$, for $k=10$, in Fig. 3. In Fig. 4, we consider the CICS for this function on a non-uniform grid\\
 $\Delta_{10} =\{0<0.05<0.1<0.4<0.7<1<1.3<1.6<1.9<1.95<2\}$. Near the end knots, the fitting result of the spline curve in Fig. 4 is better than that of the spline curve in Fig. 3. From this example, we can see that the constructed CICS possesses convexity-preserving property and convergence. The purpose of this example is to observe the effects of the changes in the step size.

Example 3. Then, we consider the histogram $I=\{2.86,1,0.5,1,2,2.86\}$ on\\ $\Delta_6 =\{0<1<2<4<6<7<8\}$ which is in convex positions. Fig. 5 shows that the fitting result is the same as \textcolor{black}{that} presented in \cite{Schm94}.

Fortunately, for the data of the examples above, the conditions \eqref{zmc20_1} are fulfilled.

Table 1.
Akima's data \cite{Aki70}.\\
\begin{tabular}{ccccccccccccccccccccc}
  \hline
$x_i$&0&&2&&3&&5&&6&&8&&9&&11&&12&&14\\
\hline
$I_i$&&10&&10&&10&&10&&10&&10&&10.5&&15&&50&\\  \hline
\end{tabular}\\

\noindent
 \includegraphics[width=7cm]{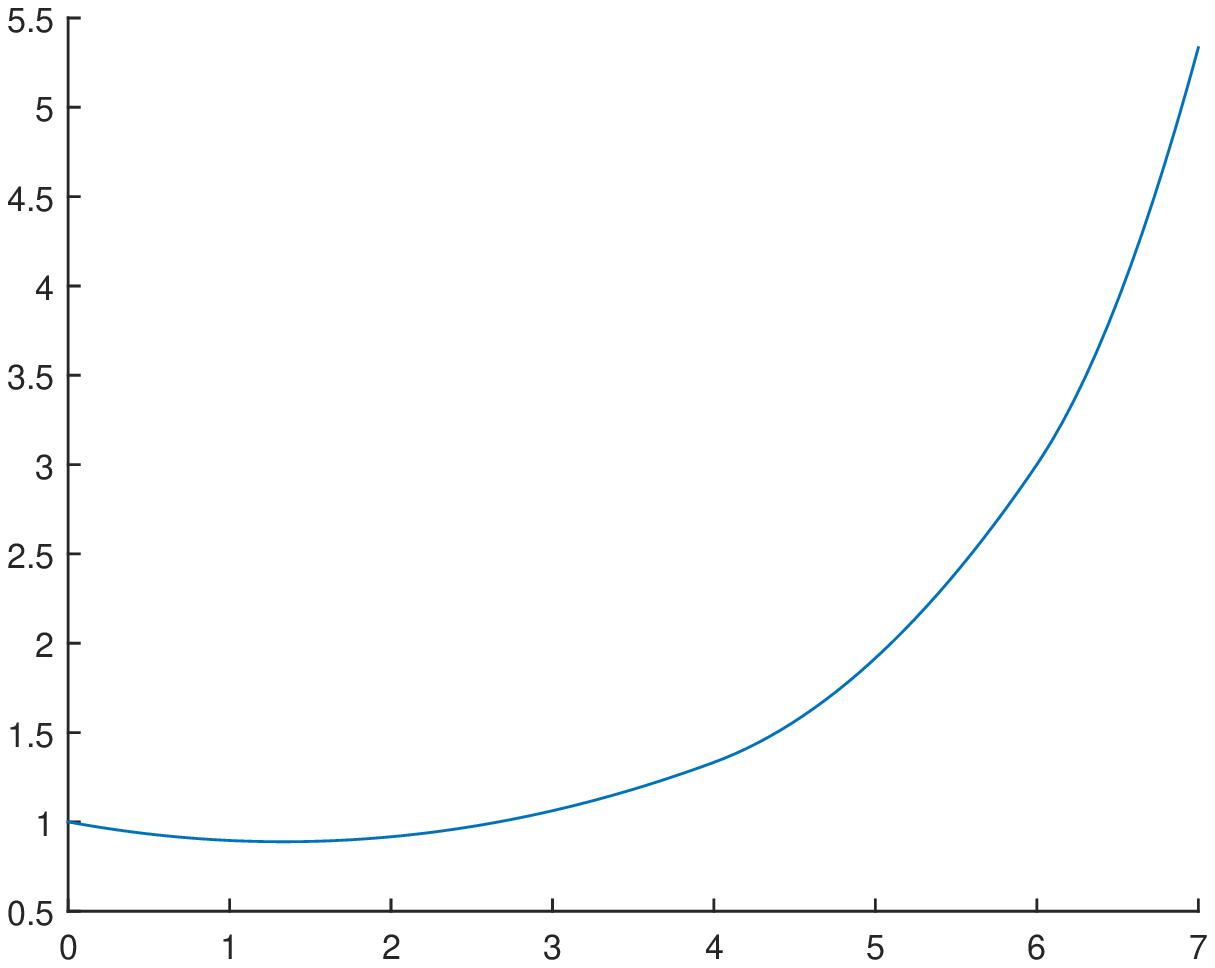} \hspace{2cm} 
 \includegraphics[width=7cm]{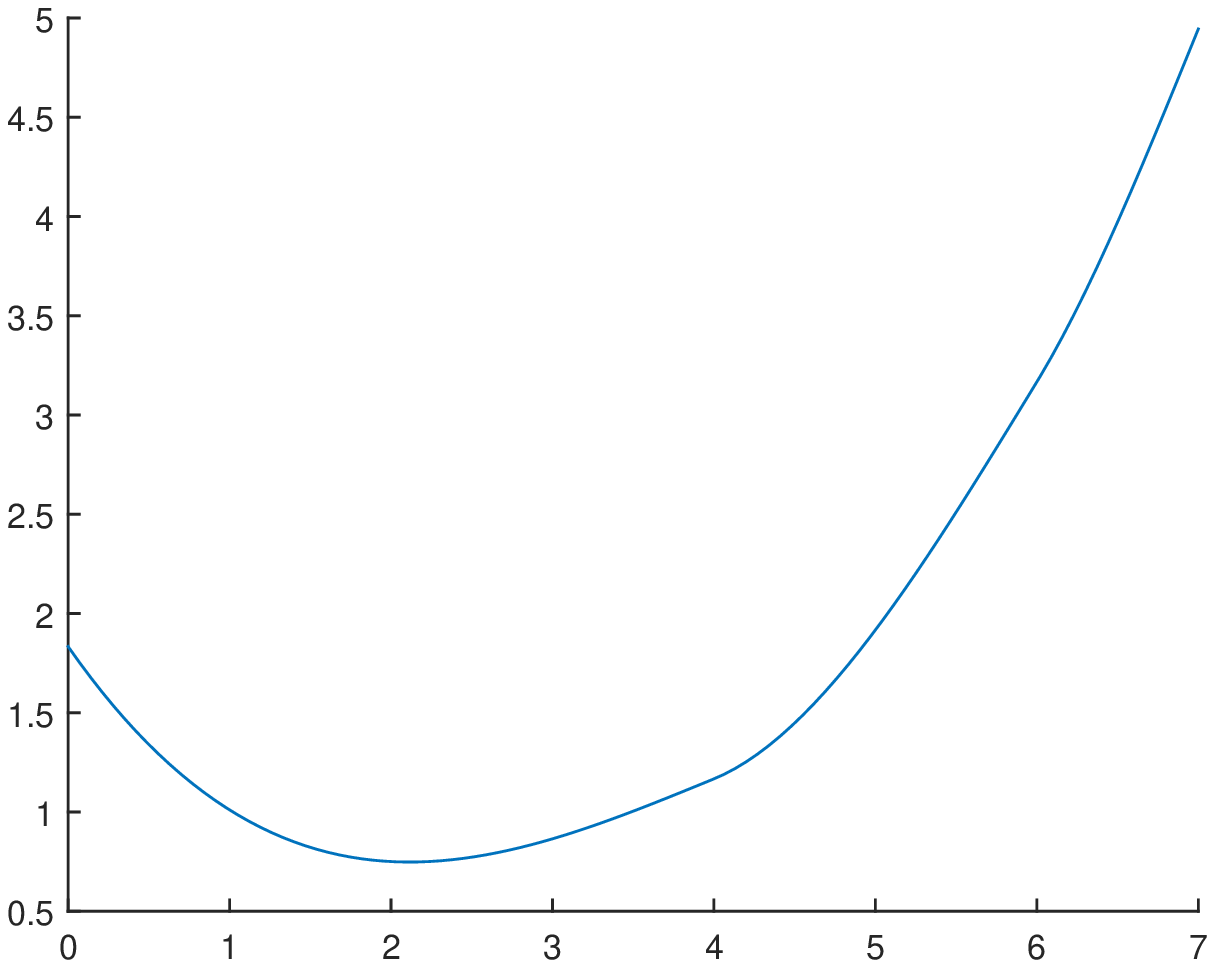} \hspace{2cm} 

{\scriptsize \noindent \textbf{Fig. 1.} Approximation by OCICS with $\alpha=0.5$ for Example 1.\hspace{0.5cm} \textbf{Fig. 2.} Approximation by CICS with $\alpha=1.0$ for Example 1.}\\
\includegraphics[width=7cm]{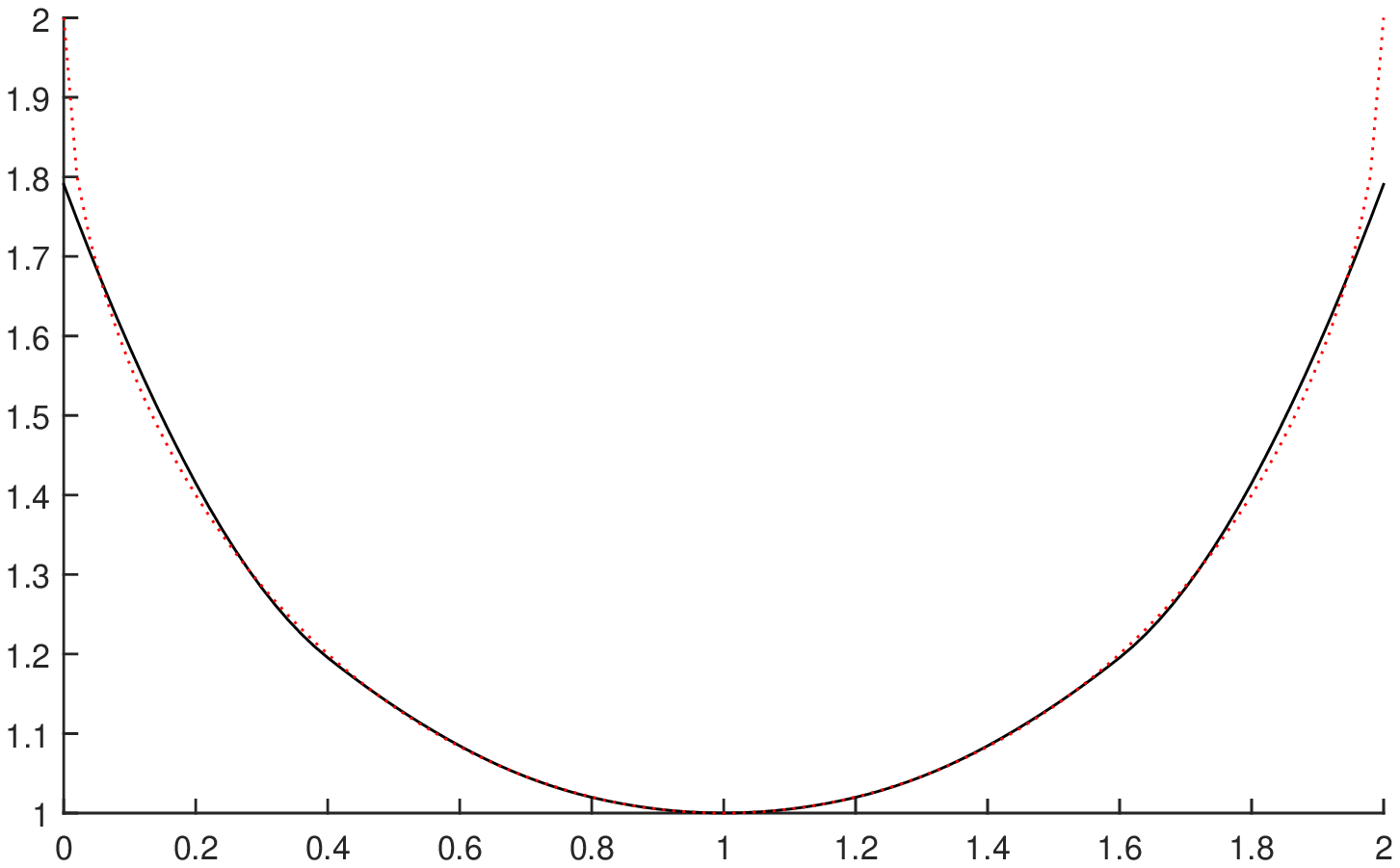} \hspace{2cm}
 \includegraphics[width=7cm]{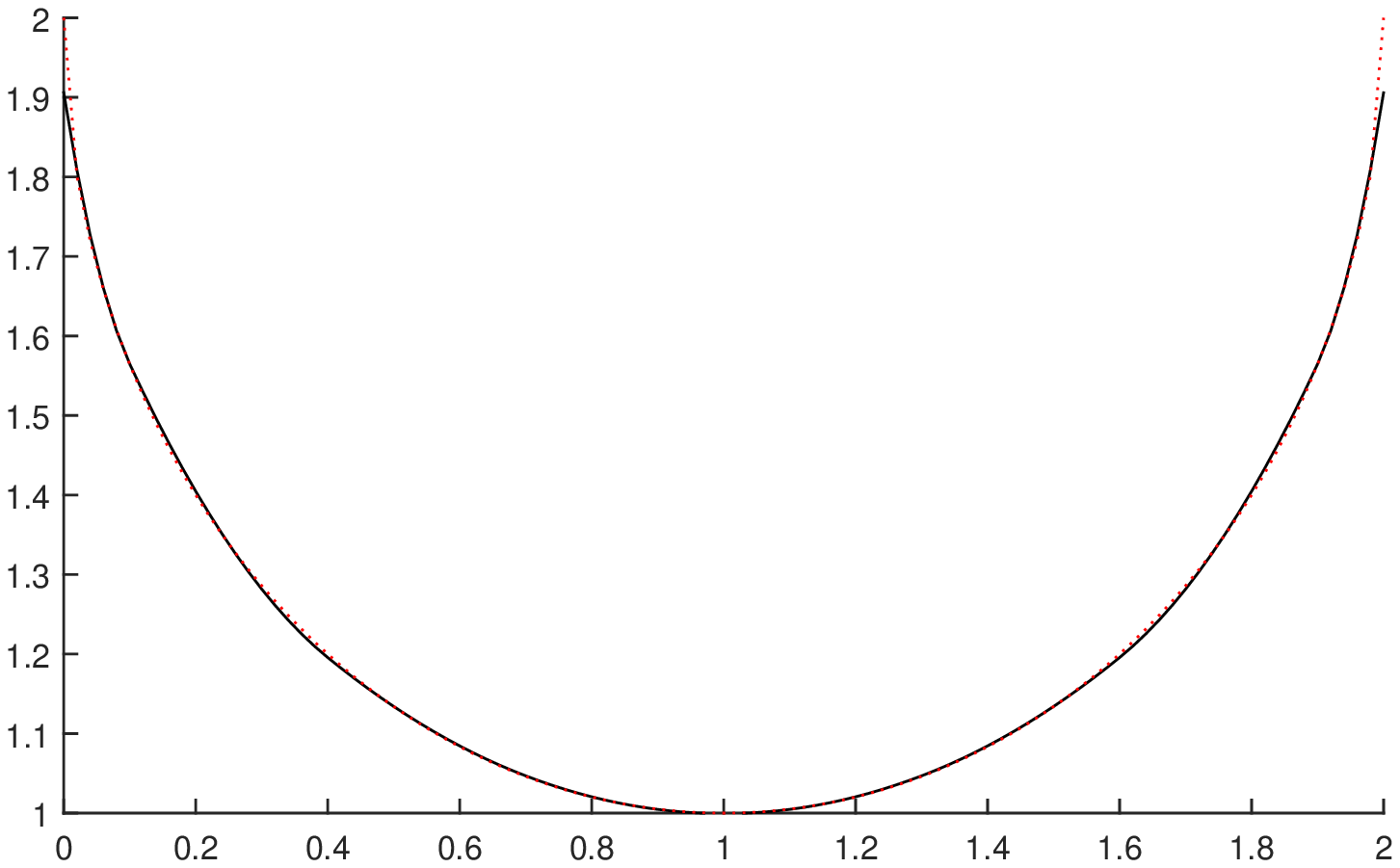} \hspace{2cm}\\
{\footnotesize \textbf{Fig. 3.} {Approximation by OCICS for $u(x),~k=10$.} \hspace{0.5cm} \textbf{Fig. 4.} Approximation by OCICS for $u(x)$ on $\Delta_{10}$.}\\
\noindent
 \includegraphics[width=6.5cm]{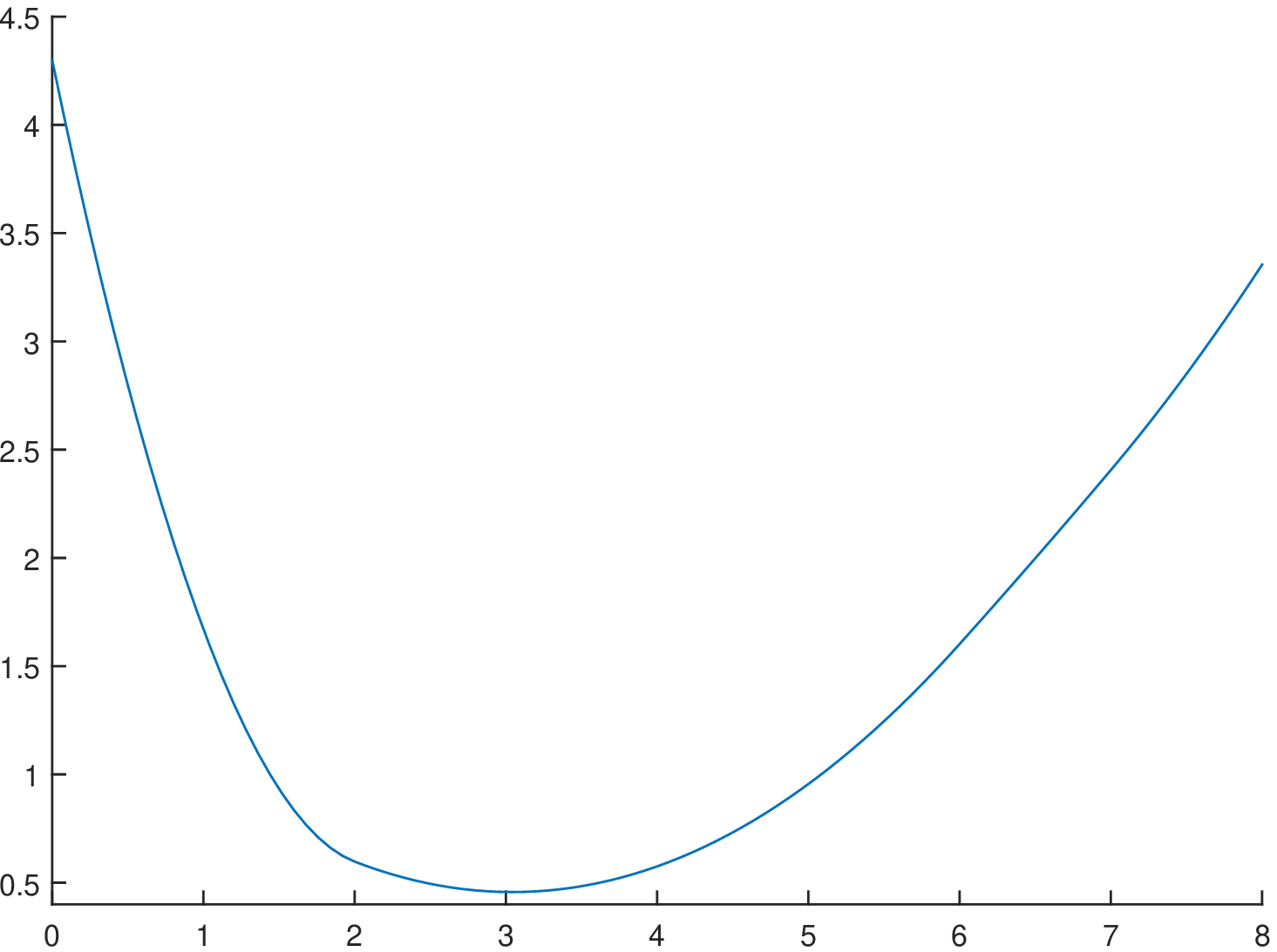} \hspace{2cm} 
 \includegraphics[width=7cm]{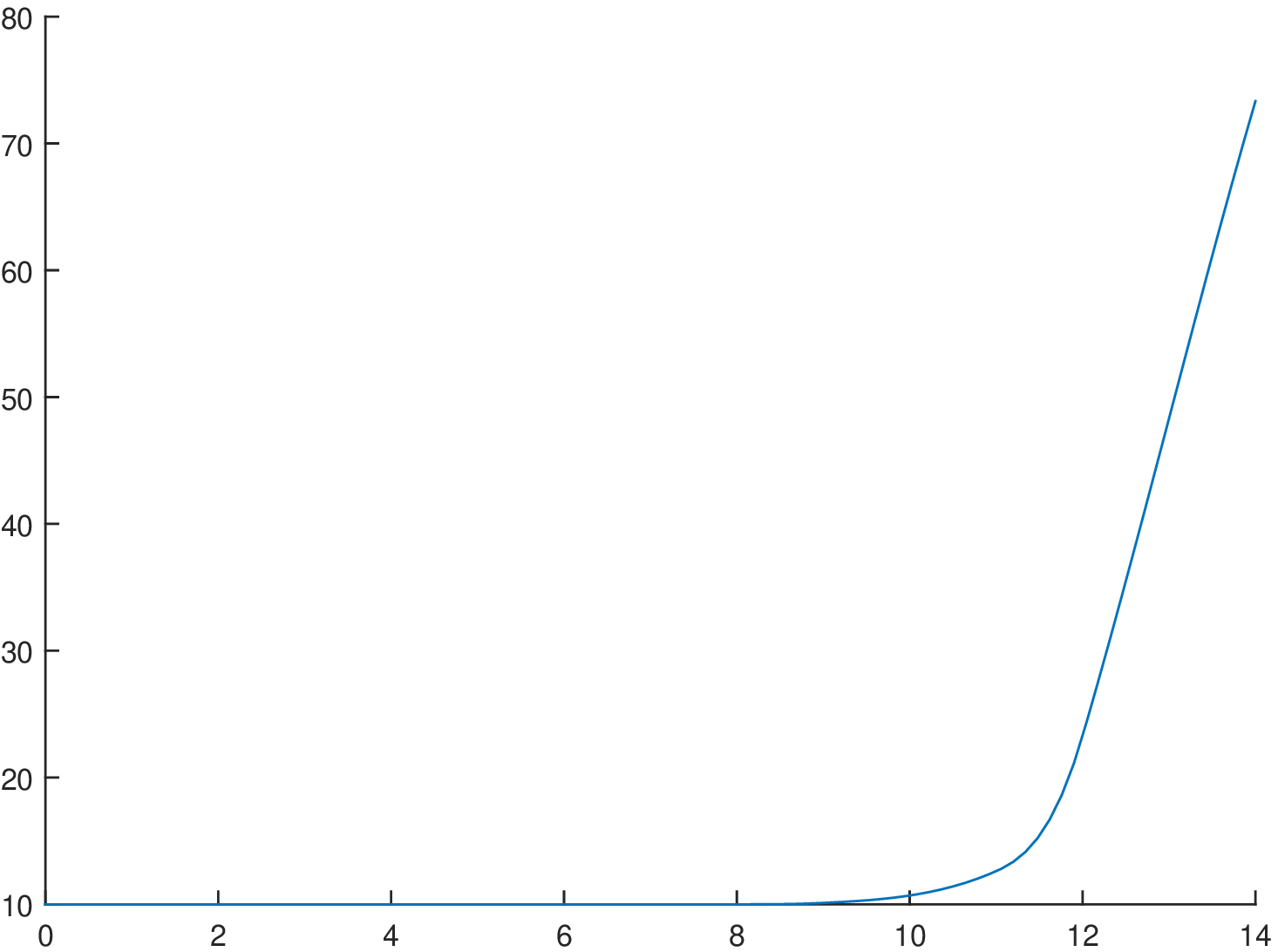} \hspace{2cm} 
\\{\footnotesize \noindent\textbf{Fig. 5.} Approximation by OCICS for Example 3. \hspace{1cm} \textbf{Fig. 6.} Approximation by OCICS for Akima's data. }\\
\vspace{0.5cm}

Example 4. The data for the last example were taken from \cite{Aki70} (see Table 1), where the conditions \eqref{zmc20_1} are not fulfilled. As for Akima's data, the solution of the system \eqref{zmcov11} does not satisfy the condition $m_{i}\leq m_{i+1}, ~i=0,\cdots,k-1$, so we can not construct the shape-preserving integro spline with the proposed method. Now, we can simply choose $m_{i}$ by
\begin{equation*}
  m_i=\delta I_i,~~i=1,\cdots,k-1,
\end{equation*}
because of \thmref{zmthm6}. The remainder $m_0$ and $m_k$ are obtained from \eqref{zmcov11} setting $i=1,k-1$, respectively. The values of $S(x_i)$ are completely determined by \eqref{zmcov7} and \eqref{zmcov8}. Fig. 6 shows that the last integro cubic spline with $\alpha=\frac12$ has a better convexity and monotonicity property.
\section*{Conclusion}
In this paper, we derive a family of $C^1$ convex integro cubic splines based on sufficient conditions for convexity. We give some sufficient convexity and monotonicity conditions for constructed integro splines. The proposed family of splines has good approximation properties. The best convex integro spline is obtained when the $\alpha$ parameter is equal to $\frac 12$. The shape-preserving properties of splines are demonstrated by numerical examples.
\bibliographystyle{unsrt}


\end{document}